\newtheorem{theorem}{Theorem}
\newtheorem{proposition}{Proposition}
\let\old@ps@headings\ps@headings
\let\old@ps@IEEEtitlepagestyle\ps@IEEEtitlepagestyle
\def\psccfooter#1{%
    \def\ps@headings{%
        \old@ps@headings%
        \def\@oddfoot{\strut\hfill#1\hfill\strut}%
        \def\@evenfoot{\strut\hfill#1\hfill\strut}%
    }%
    \def\ps@IEEEtitlepagestyle{%
        \old@ps@IEEEtitlepagestyle%
        \def\@oddfoot{\strut\hfill#1\hfill\strut}%
        \def\@evenfoot{\strut\hfill#1\hfill\strut}%
    }%
    \ps@headings%
}
\begin{document}

%
\title{Feedback Enhancement of Time Series Aggregation for Power System Expansion Planning}

\author{\IEEEauthorblockN{Ruiqi Zhang,
Ensieh Sharifnia,
Simon H. Tindemans}
\IEEEauthorblockA{Department of Electrical Sustainable Energy\\
Delft University of Technology,
Delft, The Netherlands\\ \{r.zhang-3, e.sharifnia, s.h.tindemans\}@tudelft.nl}
}

\maketitle
\raggedbottom

\begin{abstract}
As a consequence of the high variability of load demand and renewable generation, long-term and high-resolution inputs are required for power system expansion planning, making the problem intractable in real-world applications. Time series aggregation (TSA), which captures representative patterns, reduces temporal complexity while providing similar planning outputs. However, purely statistical clustering, even when enhanced with predefined ``extremes'', can overlook system-specific critical operating conditions, making it unreliable across real-world systems. Therefore, this paper links TSA accuracy on specific system operation and final solution quality, which becomes a practical bound with mean-based TSA approaches. It is observed that the distribution of operational errors is highly imbalanced, such that a few representatives dominate the total error. This paper proposes an adaptive clustering strategy based on feedback enhancement of TSA that iteratively identifies poor-performing representatives with high operational error and re-clusters only their associated periods. A study shows that the feedback enhancement improves the decision error and tighten the bound significantly compared with the plain mean-based clustering method, offering a diagnostic for TSA quality, while balancing the computational effort with solution accuracy.
\end{abstract}

\begin{IEEEkeywords}
Expansion planning; Numerical approximation; Operational cost; Optimization; Time series aggregation
\end{IEEEkeywords}

\thanksto{\noindent This work is part of the project Multi-Energy System Smart Linking (MuESSLi), funded by the Collaborative Research for Energy SYstem Modelling (CRESYM) non-profit association.}

\section{Introduction}
The rise in electrified demand together with large shares of renewable energy sources (RES) has made power system expansion planning both more critical and more challenging. Capturing variability patterns in load and RES typically requires high-resolution time series spanning a full year (e.g., 8760 hourly steps) or more, making full-resolution planning models computationally intractable for large-scale systems \cite{teichgraeber_time-series_2022}. To address this computational burden while preserving essential temporal characteristics, time series aggregation (TSA) is widely applied to construct a much smaller set of representative periods that approximate the complete time series. The resulting planning model with representative periods serves as a surrogate for the full-scale model, which is far cheaper to solve while aiming to generate similar planning outcomes \cite{li_representative_2022}.

Assuming that statistically similar periods would yield similar planning outcomes, early TSA practices applied simple heuristics such as seasonal averaging and day/night block aggregation \cite{short_regional_2011, mallapragada_impact_2018}. More recently, clustering and optimization-based TSA methods (e.g., k-means \cite{green_divide_2014}, k-medoids \cite{tejada-arango_enhanced_2018}, hierarchical clustering \cite{moradi-sepahvand_representative_2024}, direct optimization formulations \cite{gonzato_long_2021}, and their combinations \cite{riyahi_optimizing_2025}) treat elementary time segments (hours/days/weeks) as high-dimensional points, group them based on dissimilarity measures (typically Euclidean distance), and select cluster centroids or medoids as their representatives \cite{teichgraeber_clustering_2019, pinel_clustering_2020}. Some studies also incorporate global features, such as total load duration curves, to better aggregate system characteristics \cite{poncelet_selecting_2017}. 

However, aggregation inevitably loses information from the original time series, and more critically, the mapping from time series features to planning and operational costs is system-specific and nonlinear \cite{teichgraeber_time-series_2022}. Consequently, increasing statistical similarity does not guarantee improved planning accuracy, leading to inconsistent performance across systems \cite{wogrin_time_2023}. This fundamental limitation suggests that TSA should be informed by how the power system actually operates rather than solely by time series similarity \cite{hilbers_importance_2019, zhang_model-adaptive_2023}.

Several approaches include expert knowledge by hard-coding ``extreme periods'' as representatives, including peak load \cite{pinel_clustering_2020}, extreme RES availability \cite{garcia-cerezo_priority_2022}, extreme net load \cite{yeganefar_improvement_2020,moradi-sepahvand_capturing_2023}, and peak ramps \cite{scott_clustering_2019}. While often helpful in test cases, these heuristics are chosen in a one-size-fits-all manner from the profiles themselves. Their effectiveness cannot be guaranteed universally \cite{wogrin_time_2023}, as truly ``extreme'' periods depend on network topology, operational constraints, and cost parameters that remain unknown until the planning model is solved \cite{scott_clustering_2019}.

To incorporate system operation directly in TSA, recent research has explored objective-oriented TSA methods. Instead of clustering based on the raw time series profiles, these methods group periods by the similarity of investment decisions \cite{zhang_model-adaptive_2023} or its costs \cite{sun_data-driven_2019} made on single-period planning optimizations. Hilbers \emph{et al.} ran full-scale optimizations with fixed investments, used each period’s operational cost as its ``importance'', and then performed clustering the high generation cost periods with more representatives \cite{hilbers_reducing_2023}.

Furthermore, monitoring the operational cost difference between representative and full time series (for a fixed investment decision), Bahl \emph{et al.} \cite{bahl_typical_2018} iteratively add representative periods until reaching a threshold, though without mathematical guarantees. Teichgraeber and Brandt \cite{teichgraeber_clustering_2019} proved that for linear programming (LP) models with certain structure, the operational cost error bounds the objective value difference between optimal and estimated decisions. This result was extended to mixed-integer linear programming (MILP) with binary investment variables \cite{li_representative_2022, santosuosso_optimal_2025}. However, existing methods still rely on plain clustering with more representatives or heuristic selection of extreme periods based on absolute operational cost, rather than targeting the operational error that ultimately determines decision quality.

This paper directly addresses the operational cost error introduced by TSA, analyzing its distribution and proposing a feedback enhancement framework that guides clustering toward poorly performing clusters. Firstly, one cost-oriented relationship is derived linking TSA accuracy in system operation to final decision quality, which becomes a practical bound for mean-based aggregation with convex operational subproblems. Secondly, looking into the system operation error introduced by TSA, our analysis reveals that representatives perform highly imbalanced across the full-scale time series, with a small subset dominating the total error. This imbalance can not be resolved by simply increasing representative count. To address this, one system-aware re-clustering mechanism is proposed that identifies representatives with the largest operational errors and selectively improves only their associated time periods, improving both operational accuracy and decision quality. Finally, through a test case study on a modified IEEE 24-bus system, we demonstrate that the proposed feedback mechanism significantly outperforms conventional TSA methods in both operational cost estimation and planning objective accuracy.
\section{Error Analysis of Representative Periods in Power System Expansion Planning}
\label{sec:Error Bounds of Reduced Problems}
\subsection{Expansion Planning with Representative Periods}
We consider the power system expansion planning problem that selects optimal investment decisions $x\in \mathcal{X}$ based on the minimization of total (expected) cost $C^{\mathrm{tot}}\left(x, \mathcal{D} \right)$, consisting of investment cost, $C^{\mathrm{invest}}(x)$, and operational cost, $C^{\mathrm{op}}(x,\mathcal{D})$:
\begin{align}
    \label{eq:opt_full_planning_tot}
    &\arg\min_{x\in \mathcal{X}} \quad C^{\mathrm{tot}}\left(x, \mathcal{D} \right) = C^{\mathrm{invest}}(x) + C^{\mathrm{op}}(x,\mathcal{D})
\end{align}
Here, $\mathcal{D}$ represents the set of operational scenarios to be considered. Specifically, we describe these scenarios in the form of (potentially ordered) time series segments with weight $w_d$ and feature vector $T_d$ (e.g., 24-hour daily load and RES capacity factors), forming the scenario set $\mathcal{D}=\{(w_d,T_d)\}_{d=1}^{|\mathcal{D}|}$. In this paper, the segment length is assumed to be one day, but other lengths can also be used. With such formatted time series inputs, the operational cost of the power system with investments $x$ is taken as:
\begin{subequations}
    \allowdisplaybreaks
    \label{eq:opt_full_planning_op}
    \begin{align}
        \label{eq:opt_full_planning_op_obj}
        & C^{\mathrm{op}}(x,\mathcal{D})=\min_{\{y_d\}_{d=1}^{|\mathcal{D}|}} \sum\nolimits_{d=1}^{|\mathcal{D}|} w_d\cdot C^{\mathrm{op}}(y_d;x,T_d)\\
        & \mbox{s.t.} \underbrace{y_d\in \mathcal{Y}_{x,d},}_{\mbox{intra-period constraints}} \forall d\in \{1,\ldots,|\mathcal{D}|\}, \\
        & \underbrace{\left(y_d,y_{d+1}\right)\in \mathcal{Y}_{x,d,d+1},}_{\mbox{inter-period constraints}} \quad \forall d\in \{1,\ldots,\left|\mathcal{D}\right|-1\}.
    \end{align}
\end{subequations}
Here, $y_d$ are operational variables and $\mathcal{Y}_{x,d}$ and $\mathcal{Y}_{x,d,d+1}$ represent the intra-period and inter-period constraints (e.g., generator ramping, storage, line limits, and power balance).

To sufficiently capture the patterns of load, solar, and wind variation, the scenario set should reflect future operational scenarios well. We assume such a set is available and refer to it as the full-scale scenario set $\mathcal{D}^{\mathrm{full}}$ (e.g., 365 daily profiles with weights all one representing one year). Solving the full-scale expansion planning model \eqref{eq:opt_full_planning_tot} with $\mathcal{D}^{\mathrm{full}}$ gives the \emph{reference decision} $x^\star$ with (optimal) cost $C^{\mathrm{tot}}\left(x^\star, \mathcal{D}^{\mathrm{full}} \right)$. However, direct calculation of $x^\star$ is often intractable in practical applications.

Instead, TSA approaches solve problem \eqref{eq:opt_full_planning_tot}-\eqref{eq:opt_full_planning_op} using a reduced scenario set $\hat{\mathcal{D}} = \{(\hat{w}_k,\hat{T}_k)\}_{k=1}^{|\hat{\mathcal{D}}|}$ with $|\hat{\mathcal{D}}| \ll |\mathcal{D}|$. The reduced scenario should satisfy $\sum_d w_d = \sum_k \hat{w}_k$ and be designed such that the minimizer $\hat{x}$ of $C^{\mathrm{tot}}\left(\hat{x}, \hat{\mathcal{D}}\right)$ (i.e., the \emph{approximate decision}) approximates the reference decision $x^\star$.

This formulation lies at the basis of common representative day frameworks \cite{moradi-sepahvand_representative_2024}, as well as chronological time-period clustering (CTPC) (with hourly periods) \cite{pineda_chronological_2018, garcia-cerezo_priority_2022}. Often, representative periods are determined based on clustering, which partitions the set $\mathcal{D}$ into $K$ clusters $\bigcup_{k=1}^{K}\mathcal{D}_k=\mathcal{D}^{\mathrm{full}}$ and determines representative periods $\hat{T}_k$ by computing the mean or medoid of the scenario parameters in $\mathcal{D}_k$. We consider representative days (RDs) throughout this paper.

\subsection{Performance Metrics of Representative Periods}
To evaluate how well the TSA based reduced-scale problem approximates the full-scale problem in the cost domain, we distinguish three metrics: (1) simplification error, (2) decision error, and (3) operational estimation error.

\subsubsection{Simplification error (reduced vs. full optimum)}
The simplification error is the difference in total cost between the computed optimal cost of the full and reduced planning models:
\begin{align}
    \label{eq:Def_SimplificationError}
    \Delta C^{\mathrm{obj}}(\hat{x}) \equiv C^{\mathrm{tot}}\left(\hat{x},\hat{\mathcal{D}}\right) - C^{\mathrm{tot}}\left(x^\star,\mathcal{D}^{\mathrm{full}}\right).
\end{align}
It is a common metric for evaluating the quality of the  reduced-scale solution $\hat{x}$ \cite{alvarez_novel_2017, tejada-arango_enhanced_2018, pineda_chronological_2018, pinel_clustering_2020}. However, because the operational cost for $\hat{x}$ is estimated based on RDs, this error is difficult to interpret. In cases where the operational cost is under-estimated, a falsely low total cost for $\hat{x}$ might occur, resulting in a misleadingly small error when compared to the optimal total cost of $x^\star$.

\subsubsection{Decision error (true suboptimality of the estimated decision)}
Additionally, we quantify the sub-optimality of a given investment option $x$ by evaluating its actual operational cost $C^{\mathrm{tot}}\left(x, \mathcal{D}^{\mathrm{full}}\right)$ using the full scenario set \cite{li_representative_2022, moradi-sepahvand_representative_2024, anderson_representative_2024}. The difference with the cost of the reference solution, $C^{\mathrm{tot}}(x^\star,\mathcal{D}^{\mathrm{full}})$, is the \emph{decision error}, given by
\begin{align}
    \label{eq:Def_DecisionError}
    \Delta C^{\mathrm{tot}}\left(x\right) \equiv C^{\mathrm{tot}}\left(x,\mathcal{D}^{\mathrm{full}}\right) - C^{\mathrm{tot}}\left(x^\star,\mathcal{D}^{\mathrm{full}}\right).
\end{align}
$\Delta C^{\mathrm{tot}}\left(x\right)$ is non-negative by definition and directly measures the suboptimality of a given decision. In real-world cases, the reference solution $x^\star$ is not available, so \eqref{eq:Def_DecisionError} cannot be evaluated. However, the full-scale operational cost, $C^{\mathrm{tot}}\left(\hat{x}, \mathcal{D}^{\mathrm{full}}\right)$, provides an upper bound of the optimal total cost, serving as an important reference for the decision makers' consideration \cite{li_representative_2022, teichgraeber_clustering_2019, santosuosso_optimal_2025}.

\subsubsection{Operational estimation error}
The final metric, the \emph{operational estimation error}, reflects how well RDs can represent actual power system operation. For a given investment decision $x$, it is defined as \cite{bahl_time-series_2017, bahl_typical_2018}:
\begin{align}
    \label{eq:Def_OperationalCostError}
    \nonumber
    \Delta C^{\mathrm{op}}\left(x\right) & \equiv C^{\mathrm{op}}\left(x, \hat{\mathcal{D}}\right) - C^{\mathrm{op}}\left(x, \mathcal{D}^{\mathrm{full}}\right) \\
    & = C^{\mathrm{tot}}\left(x, \hat{\mathcal{D}}\right) - C^{\mathrm{tot}}\left(x, \mathcal{D}^{\mathrm{full}}\right)
\end{align}
Its sign indicates whether the operational cost is under-estimated (-) or over-estimated (+) compared to the actual operational cost. 

Moreover, we see that for the investment decision $\hat{x}$ of the reduced problem, the three errors are related to each other as 
\begin{equation}
    \Delta C^{\mathrm{tot}}\left(\hat{x}\right) = \Delta C^{\mathrm{obj}}(\hat{x}) - \Delta C^{\mathrm{op}}\left(\hat{x}\right).
\end{equation}
This relation is shown illustrated in Fig.~\ref{fig:relationship}.

\begin{figure}
    \centering
    \includegraphics[width=0.99\linewidth]{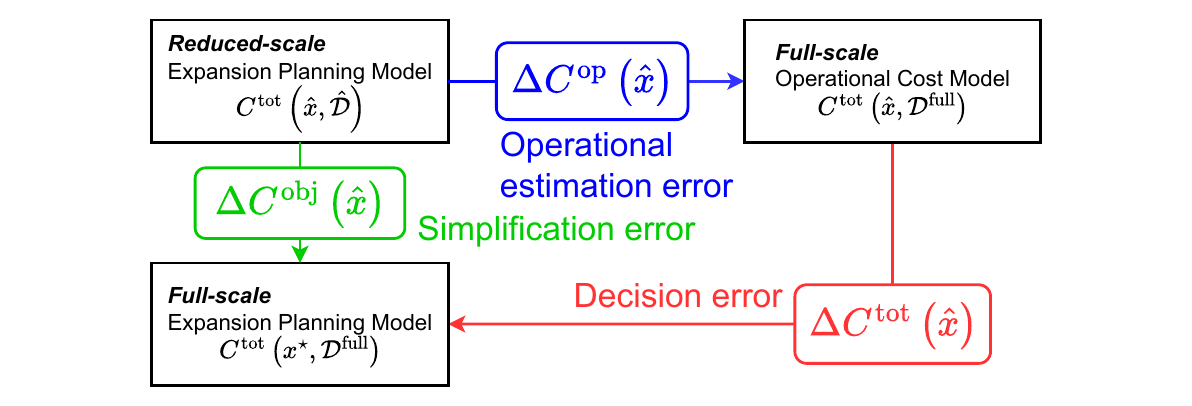}
    \caption{Relationship between three optimization models: reduced-scale planning model; full-scale operational cost model; and the reference full-scale planning model.}
    \label{fig:relationship}
\end{figure}

\subsection{Linking Operational Performance to Planning Outcomes}
Based on the optimization nature of both the reduced and full-scale problems, one bounding of decision error and operational estimation error is proposed, linking the performance of RDs and power system operation.
\begin{proposition}[General Bound on Decision Error]
    \label{proposition:general_bound_decision_error}
    For any reduced set $\hat{\mathcal{D}}$ and corresponding optimal decision $\hat{x}$, its decision error is non-negative and upper-bounded by the difference of operational estimation errors for the actual optimal decision $x^\star$ and the estimated decision $\hat{x}$.
    \begin{align}
    \label{eq:proposition_general_bound_decision_error}
    0 \leq \Delta C^{\mathrm{tot}}\left(\hat{x}\right) \leq \Delta C^{\mathrm{op}}\left(x^\star\right) - \Delta C^{\mathrm{op}}\left(\hat{x}\right)
\end{align}
\end{proposition}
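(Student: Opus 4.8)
The plan is to obtain both inequalities directly from the optimality (global minimality over $\mathcal{X}$) of $x^\star$ in the full-scale problem \eqref{eq:opt_full_planning_tot} with $\mathcal{D}^{\mathrm{full}}$ and of $\hat{x}$ in the reduced problem with $\hat{\mathcal{D}}$, combined with the algebraic identity $\Delta C^{\mathrm{tot}}(\hat{x}) = \Delta C^{\mathrm{obj}}(\hat{x}) - \Delta C^{\mathrm{op}}(\hat{x})$ already recorded in the text. No convexity or structural assumption on the operational subproblem \eqref{eq:opt_full_planning_op} is needed at this level; that ingredient only enters later when turning the bound into a practically computable one.

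For the lower bound I would just unfold definition \eqref{eq:Def_DecisionError}: $\Delta C^{\mathrm{tot}}(\hat{x}) = C^{\mathrm{tot}}(\hat{x},\mathcal{D}^{\mathrm{full}}) - C^{\mathrm{tot}}(x^\star,\mathcal{D}^{\mathrm{full}})$, and note that since $x^\star$ minimizes $C^{\mathrm{tot}}(\cdot,\mathcal{D}^{\mathrm{full}})$ over $\mathcal{X}$ and $\hat{x}\in\mathcal{X}$, this difference is non-negative.

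For the upper bound the cleanest route is via the simplification error. Using \eqref{eq:Def_SimplificationError}, $\Delta C^{\mathrm{obj}}(\hat{x}) = C^{\mathrm{tot}}(\hat{x},\hat{\mathcal{D}}) - C^{\mathrm{tot}}(x^\star,\mathcal{D}^{\mathrm{full}})$; because $\hat{x}$ minimizes $C^{\mathrm{tot}}(\cdot,\hat{\mathcal{D}})$ over $\mathcal{X}$ and $x^\star\in\mathcal{X}$, we have $C^{\mathrm{tot}}(\hat{x},\hat{\mathcal{D}}) \le C^{\mathrm{tot}}(x^\star,\hat{\mathcal{D}})$, hence $\Delta C^{\mathrm{obj}}(\hat{x}) \le C^{\mathrm{tot}}(x^\star,\hat{\mathcal{D}}) - C^{\mathrm{tot}}(x^\star,\mathcal{D}^{\mathrm{full}}) = \Delta C^{\mathrm{op}}(x^\star)$ by \eqref{eq:Def_OperationalCostError}. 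Substituting into the identity gives $\Delta C^{\mathrm{tot}}(\hat{x}) = \Delta C^{\mathrm{obj}}(\hat{x}) - \Delta C^{\mathrm{op}}(\hat{x}) \le \Delta C^{\mathrm{op}}(x^\star) - \Delta C^{\mathrm{op}}(\hat{x})$, which is \eqref{eq:proposition_general_bound_decision_error}. Equivalently one can write out $\Delta C^{\mathrm{op}}(x^\star) - \Delta C^{\mathrm{op}}(\hat{x})$ from \eqref{eq:Def_OperationalCostError}, observe that the two full-scale terms $C^{\mathrm{tot}}(x^\star,\mathcal{D}^{\mathrm{full}})$ and $C^{\mathrm{tot}}(\hat{x},\mathcal{D}^{\mathrm{full}})$ telescope against those in $\Delta C^{\mathrm{tot}}(\hat{x})$, and reduce the claim to the single optimality inequality $C^{\mathrm{tot}}(\hat{x},\hat{\mathcal{D}}) \le C^{\mathrm{tot}}(x^\star,\hat{\mathcal{D}})$.

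There is essentially no analytically hard step — the statement is a two-line consequence of the two minimality conditions — so the only thing to be careful about is the standing feasibility assumption implicit in the problem setup: the quantities $C^{\mathrm{tot}}(x^\star,\hat{\mathcal{D}})$ and $C^{\mathrm{tot}}(\hat{x},\mathcal{D}^{\mathrm{full}})$ must be finite, i.e.\ $x^\star$ must be feasible in the reduced problem and $\hat{x}$ in the full problem, so that the optimality inequalities can legitimately be applied to them. It is worth flagging that the bound involves $\Delta C^{\mathrm{op}}(x^\star)$, which presupposes knowledge of the unavailable reference decision $x^\star$; making the right-hand side computable without $x^\star$ is precisely the gap that the remainder of the paper closes for mean-based aggregation with convex operational subproblems.
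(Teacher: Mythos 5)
Your proof is correct and is essentially the paper's argument: both reduce to the single optimality inequality $C^{\mathrm{tot}}(\hat{x},\hat{\mathcal{D}}) \leq C^{\mathrm{tot}}(x^\star,\hat{\mathcal{D}})$ applied at $x=x^\star$, combined with unpacking the error definitions (the paper adds and subtracts the full-scale operational costs directly, whereas you route through the identity $\Delta C^{\mathrm{tot}}(\hat{x}) = \Delta C^{\mathrm{obj}}(\hat{x}) - \Delta C^{\mathrm{op}}(\hat{x})$, which is the same algebra in different packaging). Your remarks on feasibility of $x^\star$ in the reduced problem and of $\hat{x}$ in the full problem, and on the non-computability of $\Delta C^{\mathrm{op}}(x^\star)$, are sensible additions but not a departure from the paper's proof.
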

\begin{proof}
    The first inequality, $0 \le \Delta C^{\mathrm{tot}}\left(\hat{x}\right)$ follows directly from \eqref{eq:Def_DecisionError} and the fact that $x^\star$ is the cost-minimizing solution. 
    
    The optimality of $\hat{x}$ on $\hat{\mathcal{D}}$ implies, for all $x\in \mathcal{X}$:
    \begin{align}
        C^{\mathrm{invest}}(\hat{x}) + C^{\mathrm{op}}(\hat{x},\hat{\mathcal{D}})
        \leq C^{\mathrm{invest}}(x) + C^{\mathrm{op}}(x,\hat{\mathcal{D}})
    \end{align}
    Adding and subtracting $C^{\mathrm{op}}(\hat{x},\mathcal{D}^{\mathrm{full}})$ on the left side and $C^{\mathrm{op}}(x,\mathcal{D}^{\mathrm{full}})$ on the right side gives:
    \begin{align}
        C^{\mathrm{\mathrm{tot}}}(\hat{x},\mathcal{D}^{\mathrm{full}}) + \Delta C^{\mathrm{op}}(\hat{x})
        \leq C^{\mathrm{\mathrm{tot}}}(x,\mathcal{D}^{\mathrm{full}}) + \Delta C^{\mathrm{op}}(x)
    \end{align}
    Setting $x=x^\star$ and reordering completes the proof.
\end{proof}
Proposition~\ref{proposition:general_bound_decision_error} implies that the performance of RDs in estimating the full-scale decision is highly relevant to its capacity of estimating the actual operational cost, regardless the TSA approaches applied and how the operational problem is formulated with candidate investments.

\subsection{Practical Error Bound for Mean-based Aggregation}
To compute the bounding value in Proposition~\ref{proposition:general_bound_decision_error}, knowledge of $x^\star$ is required, which is not available in practice. Nevertheless, the two components of the bound reflect how well the representative days capture system operation under different configurations. For certain formulations of the planning problem, mean-based clustering methods exhibit a systematic behavior to underestimate operational costs across different power systems \cite{teichgraeber_clustering_2019, li_representative_2022}.

Prioritizing ease of solution over accuracy, operational problems with linear constraints and convex cost functions are widely applied in energy system planning problems \cite{li_representative_2022, moradi-sepahvand_representative_2024, teichgraeber_clustering_2019, garcia-cerezo_priority_2022}. If we neglect inter-period constraints and consider models where time series data only loaded affects the right hand side of constraints, the planning problem is structured as \eqref{eq:opt_full_planning_tot} with the linear operational subproblem 
\begin{subequations}
    \label{eq:opt_RD_planning_op_bounded}
    \begin{align}
        \label{eq:opt_RD_planning_op_bounded_objective}
        & C^{\mathrm{op}}(x,\mathcal{D})=\min_{\{y_d\}_{d=1}^{|\mathcal{D}|}} \sum\nolimits_{d=1}^{|\mathcal{D}|} w_d\cdot C^{\mathrm{op}}(y_d;x), \\
        \label{eq:opt_RD_planning_op_bounded_constraint_leq}
        & \mbox{s.t.} \quad A_xy_d\leq M_x\cdot T_d + b_x, \quad \forall d\in \mathcal{D}, \\
        \label{eq:opt_RD_planning_op_bounded_constraint_eq}
        & \quad \quad B_xy_d = N_x\cdot T_d + d_x, \quad \forall d\in \mathcal{D},
    \end{align}
\end{subequations}
with scenario-dependent operational decisions $\{y_d\}_{d=1}^{|\mathcal{D}|}$ and cost functions $C^{\mathrm{op}}(y_d;x)$ that are convex in $y_d$. We note that the overarching investment problem (to determine $x$) does not need to be linear or convex.

In the following, we consider mean-based clustering, where representatives are cluster averages. Slightly abusing notation, we refer to scenario indices in clusters as $d \in \mathcal{D}_k$, and define
\begin{align}
    \label{eq:mean-based clustering}
    \hat{\mathcal{D}}=\{(\hat{w}_k,\hat{T}_k)\}_{k=1}^{K}=\left\{\left(\sum_{d \in \mathcal{D}_k} w_d,\frac{\sum_{d\in \mathcal{D}_k} w_d T_d}{\sum_{d \in \mathcal{D}_k} w_d}\right)\right\}_{k=1}^{K}
\end{align}
\begin{proposition}[Under-Estimation with Mean-based Clustering \cite{teichgraeber_clustering_2019, li_representative_2022}]
    \label{proposition:Under-estimation}
    Consider planning problems formulated as \eqref{eq:opt_full_planning_tot} with operational cost as \eqref{eq:opt_RD_planning_op_bounded}, and mean-based scenario clustering \eqref{eq:mean-based clustering}. Then, for any $x\in \mathcal{X}$, the reduced-scale model underestimates the full-scale operation.
    \begin{align} \label{eq:proposition-under-estimation}
    \Delta C^{\mathrm{op}}\left(x\right) \equiv C^{\mathrm{tot}}\left(x, \hat{\mathcal{D}}\right) - C^{\mathrm{tot}}\left(x, \mathcal{D}^{\mathrm{full}}\right) \leq 0
\end{align}
\end{proposition}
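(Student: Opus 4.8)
The plan is to reduce the statement to a single application of Jensen's inequality to the per-period optimal value function, so that the argument becomes essentially a convexity statement.

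First I would exploit the separable structure of \eqref{eq:opt_RD_planning_op_bounded}. Because inter-period constraints are neglected and the objective \eqref{eq:opt_RD_planning_op_bounded_objective} is a weighted sum of per-period costs $C^{\mathrm{op}}(y_d;x)$, each coupled only to its own scenario $T_d$ through \eqref{eq:opt_RD_planning_op_bounded_constraint_leq}--\eqref{eq:opt_RD_planning_op_bounded_constraint_eq}, the inner minimization over $\{y_d\}$ decouples across periods. This lets me define the per-period value function
\[
g_x(T) \equiv \min_{y} \left\{ C^{\mathrm{op}}(y;x) \;:\; A_x y \le M_x T + b_x,\; B_x y = N_x T + d_x \right\},
\]
so that $C^{\mathrm{op}}(x,\mathcal{D}) = \sum_{d} w_d\, g_x(T_d)$ for any scenario set $\mathcal{D}$. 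In particular $C^{\mathrm{op}}(x,\mathcal{D}^{\mathrm{full}}) = \sum_{k=1}^{K}\sum_{d\in\mathcal{D}_k} w_d\, g_x(T_d)$ and $C^{\mathrm{op}}(x,\hat{\mathcal{D}}) = \sum_{k=1}^{K} \hat{w}_k\, g_x(\hat{T}_k)$ with the $(\hat{w}_k,\hat{T}_k)$ from \eqref{eq:mean-based clustering}.

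Next I would establish that $g_x$ is convex in $T$. The feasible set $\{(y,u,v): A_x y - u \le 0,\; B_x y - v = 0\}$ is convex and $C^{\mathrm{op}}(\cdot;x)$ is convex, so partial minimization over $y$ yields a function of the right-hand-side data $(u,v)$ that is convex; composing with the affine map $T \mapsto (M_x T + b_x,\, N_x T + d_x)$ preserves convexity, giving convexity of $g_x$. With this in hand, for each cluster $k$ I apply Jensen's inequality with the weights $w_d/\hat{w}_k$, using $\hat{w}_k = \sum_{d\in\mathcal{D}_k} w_d$ and $\hat{T}_k = \hat{w}_k^{-1}\sum_{d\in\mathcal{D}_k} w_d T_d$, to obtain $\hat{w}_k\, g_x(\hat{T}_k) \le \sum_{d\in\mathcal{D}_k} w_d\, g_x(T_d)$. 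Summing over $k=1,\dots,K$ and using the partition property $\bigcup_k \mathcal{D}_k = \mathcal{D}^{\mathrm{full}}$ gives $C^{\mathrm{op}}(x,\hat{\mathcal{D}}) \le C^{\mathrm{op}}(x,\mathcal{D}^{\mathrm{full}})$. Since $C^{\mathrm{invest}}(x)$ enters both total costs identically, this is exactly $\Delta C^{\mathrm{op}}(x) \le 0$, i.e.\ \eqref{eq:proposition-under-estimation}.

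The main obstacle I expect is making the convexity of $g_x$ fully rigorous, since it hinges precisely on the structural assumptions of \eqref{eq:opt_RD_planning_op_bounded}: that the time series data $T$ enters \emph{only} the right-hand sides (so it acts as an affinely embedded perturbation) and that the per-period cost is convex in $y$ alone. If $T$ multiplied decision variables, or appeared in a nonconvex cost, $g_x$ would in general fail to be convex and the sign of $\Delta C^{\mathrm{op}}$ could not be pinned down. A secondary, more technical point is handling infeasible or unbounded per-period subproblems (extended-real-valued / proper convexity); but under the standing assumption that the full-scale model is well posed, each $g_x(T_d)$ is finite, and convexity then forces $g_x(\hat{T}_k)$ — evaluated at a convex combination of the $T_d$, $d\in\mathcal{D}_k$ — to be finite as well, so the Jensen step is non-vacuous.
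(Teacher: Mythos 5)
Your proof is correct, and it reaches the conclusion by a slightly different route than the paper. The paper argues at the level of operational \emph{solutions}: it takes a feasible full-scale solution $\{y_d\}$, averages it within each cluster to form $\hat{y}_k=\sum_{d\in\mathcal{D}_k} w_d y_d/\hat{w}_k$, checks that this averaged point satisfies the reduced-scale constraints (because $T$ enters only the right-hand sides and $\hat{T}_k$ is the weighted mean), and then applies Jensen to the convex cost $C^{\mathrm{op}}(\cdot;x)$ to bound the reduced objective. You instead argue at the level of the \emph{value function}: you use the absence of inter-period constraints to decouple \eqref{eq:opt_RD_planning_op_bounded} into per-period problems, invoke the standard fact that partial minimization of a convex objective over a convex set composed with the affine right-hand-side map $T\mapsto(M_xT+b_x,\,N_xT+d_x)$ yields a convex $g_x(T)$, and apply Jensen directly to $g_x$ within each cluster. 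The two are mathematically equivalent — the convexity of $g_x$ is itself proved by exactly the paper's averaging construction — so the difference is mainly in packaging. Your version is more modular (the convexity of the perturbation function is a reusable, citable fact, and the extended-real-valued treatment of infeasible subproblems is handled cleanly), but it explicitly requires the per-period decoupling of the objective and constraints, whereas the paper's construction works directly on the joint formulation and makes transparent that only two ingredients are used: feasibility of the cluster-averaged point and convexity of the per-period cost. Both proofs rest on the same structural assumptions ($T$ only in the right-hand sides, convex per-period cost, no inter-period constraints), and you correctly identified these as the places where the argument would break.
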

\begin{proof}
    \allowdisplaybreaks
    Following \cite{li_representative_2022}, for a fixed investment decision $x$, suppose $\{y_d\}_{d=1}^{|\mathcal{D}|}$ is a feasible solution for the full-scale planning problem. Within each cluster $\mathcal{D}_k$, we average the operational constraints \eqref{eq:opt_RD_planning_op_bounded_constraint_leq} and \eqref{eq:opt_RD_planning_op_bounded_constraint_eq} with weights $w_d$, resulting in:
    \begin{align}
        A_x\sum_{d\in \mathcal{D}_k}\frac{w_d y_d}{\hat{w}_k} &\leq M_x\sum_{d\in \mathcal{D}_k} \frac{w_d T_d}{\hat{w}_k} + b_x, && \forall k=1,\ldots, |\hat{\mathcal{D}}|, \\
        C_x\sum_{d\in \mathcal{D}_k}\frac{w_d y_d}{\hat{w}_k} &= N_x \sum_{d\in \mathcal{D}_k} \frac{w_d T_d}{\hat{w}_k} + d_x, && \forall k=1,\ldots, |\hat{\mathcal{D}}|.
    \end{align}
    These are exactly the operational constraints for the reduced-scale problem. Therefore, $\left\{\hat{y}_k=\sum_{d\in \mathcal{D}_k}\frac{w_d y_d}{\hat{w}_k}\right\}_{k=1}^{|\hat{\mathcal{D}}|}$ is also a feasible solution set for the reduced planning model. In addition, as $C^{\mathrm{op}}(y_d;x)$ is a convex function, we have:
    \begin{align}
    \nonumber
        C^{\mathrm{op}}(\hat{y}_k;x)&=C^{\mathrm{op}}\left(\sum_{d\in \mathcal{D}_k}\frac{w_d y_d}{\hat{w}_k}; x\right)\\
        &\leq \frac{1}{\hat{w}_k}\sum_{d\in\mathcal{D}_k} w_d  C^{\mathrm{op}}(y_d;x).
    \end{align}
    Therefore, aggregating all clusters, we have:
    \begin{align}
        \sum\nolimits_{k=1}^{|\hat{\mathcal{D}}|} \hat{w}_k C^{\mathrm{op}}(\hat{y}_k;x) \leq \sum\nolimits_{d=1}^{|\mathcal{D}|} w_d C^{\mathrm{op}}(y_d;x)
    \end{align}
    Therefore, if a feasible solution of the full-scale problem exists, it is always possible to find a feasible solution of the reduced-scale problem that yields the same or lower objective value. This proves \eqref{eq:proposition-under-estimation}, because
    \begin{align}
        C^{\mathrm{op}}(x,\hat{\mathcal{D}}) \leq C^{\mathrm{op}}(x,\mathcal{D}^{\mathrm{full}}).
    \end{align}
\end{proof}
\begin{theorem}
    \label{theorem:Bounding_OptimalityGap}
    For planning problems \eqref{eq:opt_full_planning_tot} with operational models formulated as \eqref{eq:opt_RD_planning_op_bounded} and mean-based scenario clustering
    \eqref{eq:mean-based clustering}, the total cost error of the reduced problem is non-negative and upper bounded by the negative operational estimation error of the estimated decision:
    \begin{align}
        0 \leq \Delta C^{\mathrm{tot}}\left(\hat{x}\right) \leq -\Delta C^{\mathrm{op}}\left(\hat{x}\right)
    \end{align}
\end{theorem}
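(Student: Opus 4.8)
The plan is to obtain Theorem~\ref{theorem:Bounding_OptimalityGap} as an immediate consequence of the two preceding results, since its hypotheses --- the planning problem \eqref{eq:opt_full_planning_tot} with the linear/convex operational subproblem \eqref{eq:opt_RD_planning_op_bounded} and mean-based clustering \eqref{eq:mean-based clustering} --- are exactly those under which both Proposition~\ref{proposition:general_bound_decision_error} and Proposition~\ref{proposition:Under-estimation} apply. The lower bound requires no new work: $0 \le \Delta C^{\mathrm{tot}}(\hat{x})$ is already the first inequality of Proposition~\ref{proposition:general_bound_decision_error} (equivalently, it follows directly from \eqref{eq:Def_DecisionError} and the optimality of $x^\star$ over $\mathcal{X}$ with the full scenario set).

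For the upper bound, I would first invoke Proposition~\ref{proposition:general_bound_decision_error}, which holds for \emph{any} reduced set and its optimal decision, to write
\begin{align}
    \Delta C^{\mathrm{tot}}\left(\hat{x}\right) \leq \Delta C^{\mathrm{op}}\left(x^\star\right) - \Delta C^{\mathrm{op}}\left(\hat{x}\right).
\end{align}
It then remains to discard the $\Delta C^{\mathrm{op}}(x^\star)$ term. Since $x^\star \in \mathcal{X}$ and the model has the structure \eqref{eq:opt_RD_planning_op_bounded} with mean-based clustering, Proposition~\ref{proposition:Under-estimation} applies with the particular choice $x = x^\star$, giving $\Delta C^{\mathrm{op}}(x^\star) \le 0$. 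Substituting this into the inequality above yields $\Delta C^{\mathrm{tot}}(\hat{x}) \le -\Delta C^{\mathrm{op}}(\hat{x})$, completing the chain.

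There is no genuine technical obstacle here; the only point that needs care is verifying that the two propositions are simultaneously in force. Proposition~\ref{proposition:general_bound_decision_error} is fully general and imposes nothing beyond the optimality of $\hat{x}$ on $\hat{\mathcal{D}}$ and of $x^\star$ on $\mathcal{D}^{\mathrm{full}}$, while Proposition~\ref{proposition:Under-estimation} is precisely the ingredient that converts the non-computable quantity $\Delta C^{\mathrm{op}}(x^\star)$ in the general bound into a sign condition, and its assumptions coincide with those stated in the theorem. I would also remark that this is where the bound becomes \emph{practical}: unlike $\Delta C^{\mathrm{op}}(x^\star)$, the quantity $-\Delta C^{\mathrm{op}}(\hat{x})$ depends only on the approximate decision $\hat{x}$ and can be evaluated by re-solving the operational problem on $\mathcal{D}^{\mathrm{full}}$ with $x$ fixed to $\hat{x}$, which motivates the feedback scheme in the remainder of the paper.
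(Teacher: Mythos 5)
Your proposal is correct and follows essentially the same route as the paper: both combine Proposition~\ref{proposition:general_bound_decision_error} with Proposition~\ref{proposition:Under-estimation} applied at $x=x^\star$ to conclude $\Delta C^{\mathrm{op}}(x^\star)\le 0$ and thereby drop that term from the general bound. Your closing remark on why the resulting bound is practically computable matches the paper's own motivation for the feedback scheme.
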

\begin{proof}
    Given Propositions \ref{proposition:general_bound_decision_error} and \ref{proposition:Under-estimation}, we have:
    \begin{align}
        \label{eq:Def_Bounding_OperationalCostError}
        \Delta C^{\mathrm{op}}\left(\hat{x}\right) \leq \Delta C^{\mathrm{op}}\left(x^\star\right)\leq 0
    \end{align}
    Therefore, Proposition \ref{proposition:general_bound_decision_error} still holds after removing $\Delta C^{\mathrm{op}}(x^\star)$, completing the proof.
\end{proof}

In conclusion, regardless the clustering approaches, it is proved that the performance of RDs in estimating the full-scale planning problems is highly relevant to its capacity of estimating the actual operational cost. Especially when the mean points are selected as representatives, a practical bound can be derived for one generally applied planning framework. Moreover, a targeted reduction of the operational estimation error $\Delta C^{\mathrm{op}}\left(\hat{x}\right)$ is guaranteed to tighten the bound on the decision error.
\section{Case Study: IEEE 24-bus RTE Test System}
\label{sec:Case Study}
To evaluate the impacts of clustering errors on the final cost using clustering methods, a case study of generation and transmission co-expansion planning model is implemented on the modified IEEE 24-bus RTE test system with load and wind patterns of the Netherlands in 2019 (also used in \cite{moradi-sepahvand_representative_2024}). This system includes 22 loads, 10 thermal units, 6 candidate wind investments, and 5 candidate transmission lines. The planning problem minimizes the total investment and operational costs, subject to generator and line capacity limits, ramping constraints of thermal units, load shedding, wind curtailment, and power balance based on the DC optimal power flow formulation.

The full-scale time series of load demand and available wind capacity patterns span a full year at hourly resolution. These were normalized into load and wind capacity factors, and subsequently segmented into 365 daily time-series vectors. Clustering was performed to find RDs on the joint daily factor features $(\text{load},\ \text{wind})$. Specifically, mean-based hierarchical clustering (Ward’s linkage) was used with the dissimilarity metric between clusters $\mathcal{D}_a$ and $\mathcal{D}_b$:
\begin{align}
    \text{Dist}(\mathcal{D}_a,\mathcal{D}_b)=\frac{2\lvert \mathcal{D}_a\rvert \lvert \mathcal{D}_b\rvert}{\lvert \mathcal{D}_a\rvert+\lvert \mathcal{D}_b\rvert}\left|\left|\frac{1}{\lvert \mathcal{D}_a\rvert}\sum_{i\in \mathcal{D}_a}T_i-\frac{1}{\lvert \mathcal{D}_b\rvert}\sum_{j\in \mathcal{D}_b}T_j\right|\right|^2
\end{align}
Here $\mathcal{D}_a$ and $\mathcal{D}_b$ denote two clusters of days, and $d_i$ is the feature vector (load and wind factor) of day $i$. The resulting RD set was determined as the mean value of each cluster according to \eqref{eq:mean-based clustering}.

The expansion planning model was formulated using Pyomo in Python and solved with Gurobi and computations were executed on an Apple M1 Pro with 16 GB RAM. All code and data used in this study are available \cite{zhang_code_2025}.

\section{Analysis of Operational Estimation Error}
\label{sec:Analysis of Operational Estimation Error}
It is evident that the operational estimation error is a crucial metric to evaluate and improve the performance of RDs. Therefore, looking into the operational estimation error, its characteristics across both original and RDs are analyzed.

\subsection{Operational Estimation Error over Original Days}
In reduced-scale planning models, the operational costs of individual original days are approximated using the cluster representatives. By comparing the actual and estimated operational costs for each original day, the operational estimation error per original day can be obtained; the sum of these errors over all days constitutes the total operational estimation error. To investigate also the statistical dissimilarity between the original days and its RDs, the Euclidean distance between an original day $T_i$ and its assigned RD $\hat{T}_j$,
\begin{align}
    \text{Dist}(T_i,\hat{T}_j) = \left|\left|T_i-\hat{T}_j\right|\right|_2,
\end{align}
was computed and reported as the time series error.

Fig.~\ref{fig:comparison_oc_ts} shows the distribution of operational estimation error and time series error across original days for 20 and 40 RDs. It is observed that the operational estimation error is extremely unevenly distributed. A small fraction of days shows extremely large errors, contributing disproportionately to the total error, whereas the majority are well-represented with relatively low errors. Increasing the number of RDs reduces the total error but does not improve the unbalanced distribution: on some days the error is significantly higher than on other days.
\begin{figure}[bhtp]
    \centering
        \subfloat{\includegraphics[width=0.9\columnwidth]{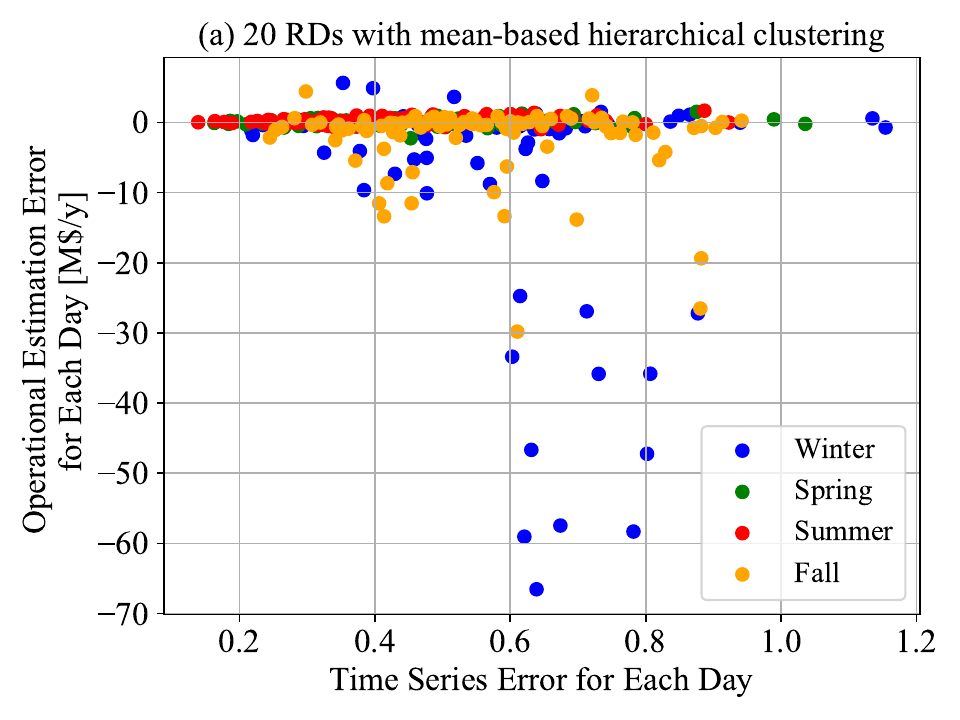}%
        \label{fig:comparison_oc_ts_20}}
        \vspace{-5mm}
    \hfil
        \subfloat{\includegraphics[width=0.9\columnwidth]{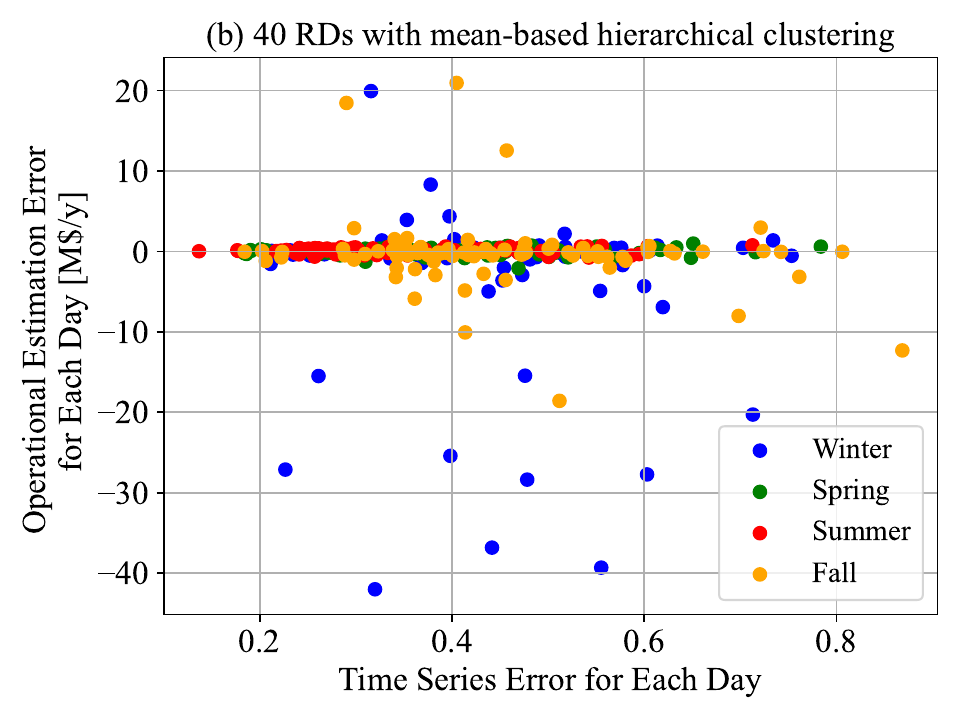}%
        \label{fig:comparison_oc_ts_40}}
    \caption{Distribution of operational estimation error and time-series error across original days, for 20 and 40 RDs.}
    \label{fig:comparison_oc_ts}
\end{figure}

It is noteworthy that these ill-estimated days with high operational estimation errors are primarily located in fall and winter, but these do not show exceptionally high time series errors. In the RD selection, the difference of load and wind factors between days are considered instead of their actual levels. However, the operational cost is not linearly related to the time series error. When the actual load factor is high, the power system operates closer to its operational limits to meet the load requirements. The time series error may either overlook or incorrectly trigger power congestion, resulting in a much higher operational cost error compared to normal cases. This highlights the system-specific, nonlinear sensitivity between scenarios and system operation, and the limits of purely input-based clustering approaches.

\subsection{Operational Estimation Error over Representative Days}
\begin{figure}[bhtp]
    \centering
        \subfloat{\includegraphics[width=0.9\columnwidth]{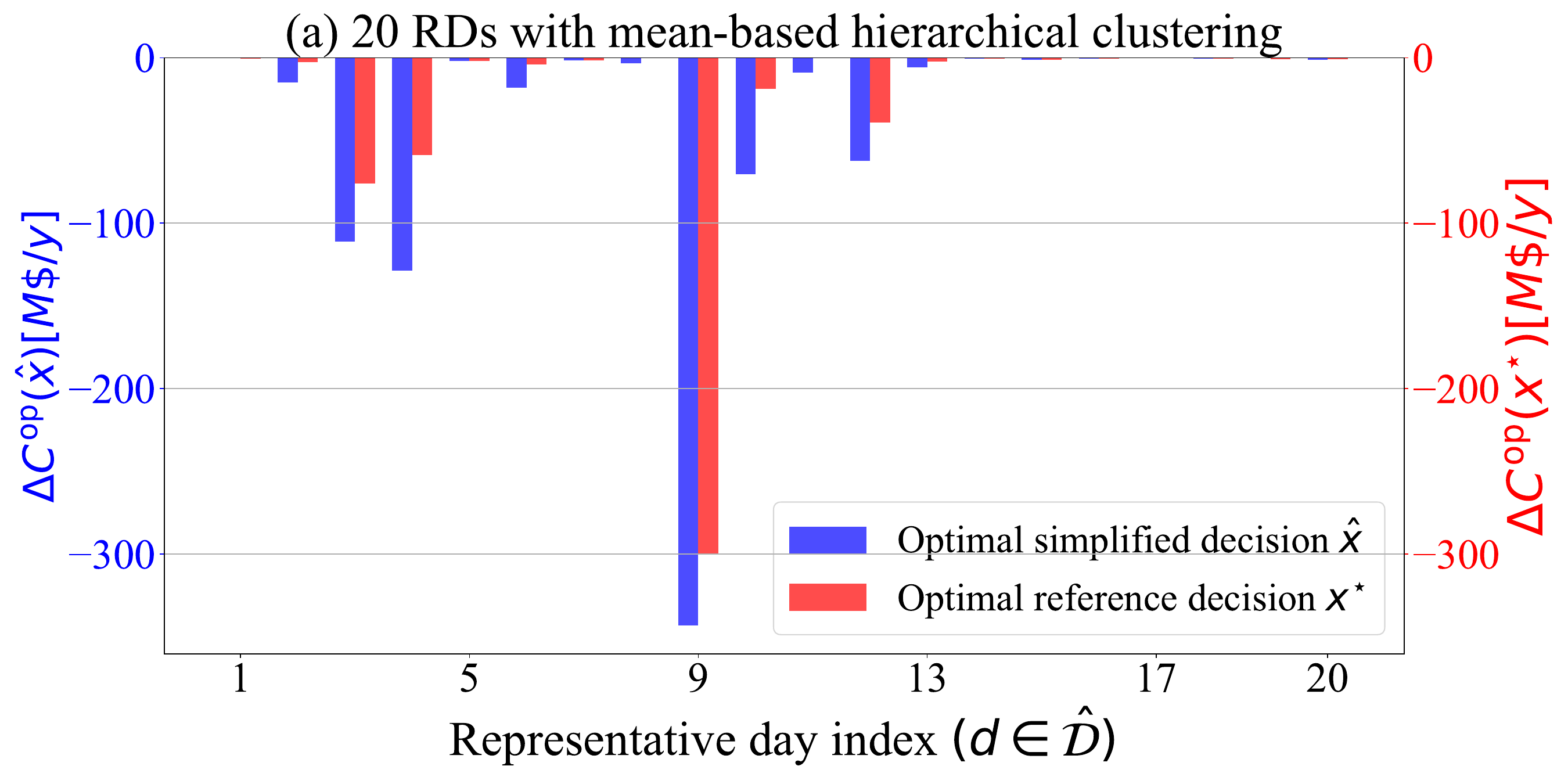}%
        \label{fig:OC_RD_combined_20}}
        \vspace{-3mm}
    \hfil
        \subfloat{\includegraphics[width=0.9\columnwidth]{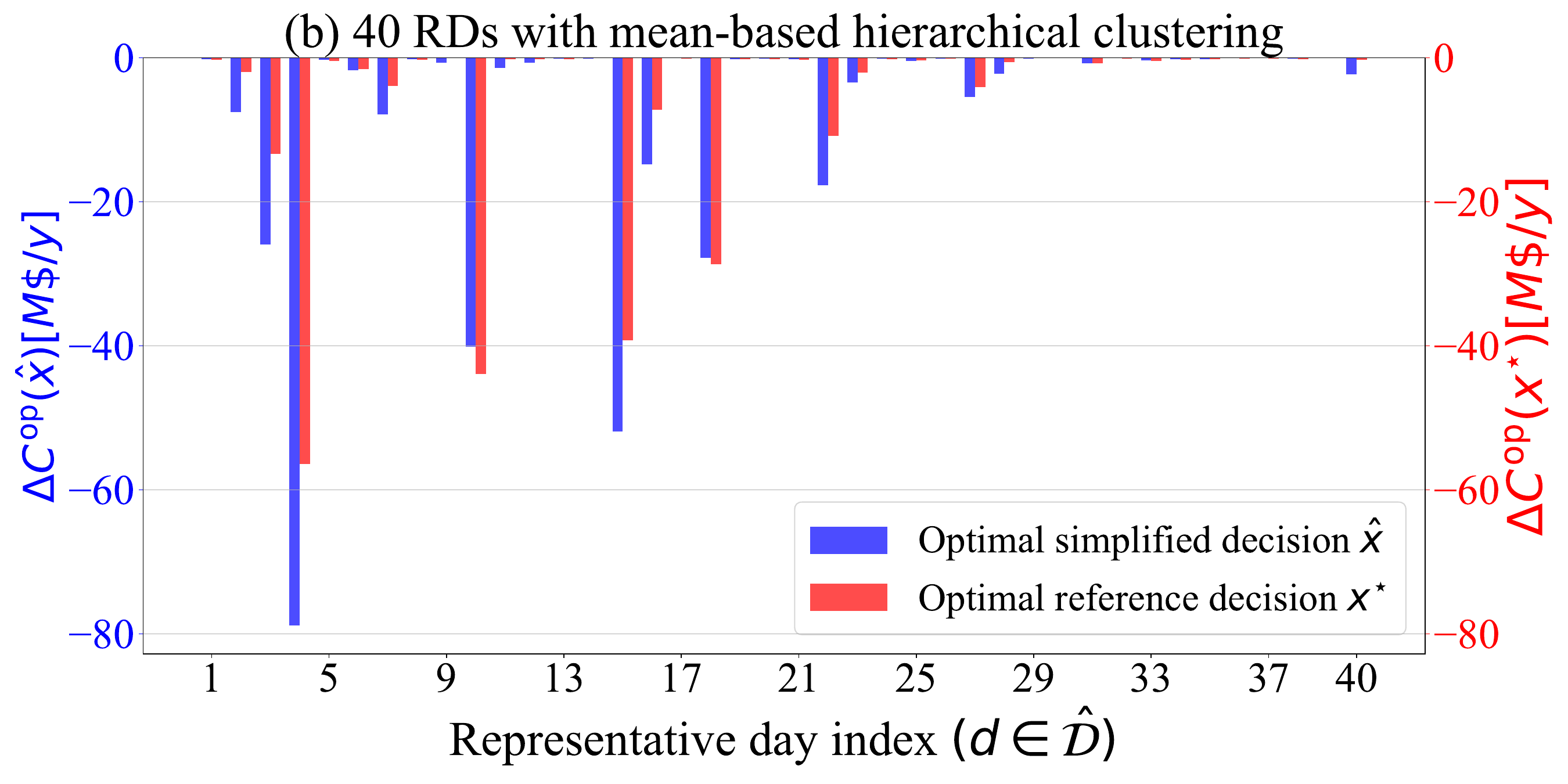}%
        \label{fig:OC_RD_combined_40}}
        \vspace{-3mm}
    \hfil
        \subfloat{\includegraphics[width=0.9\columnwidth]{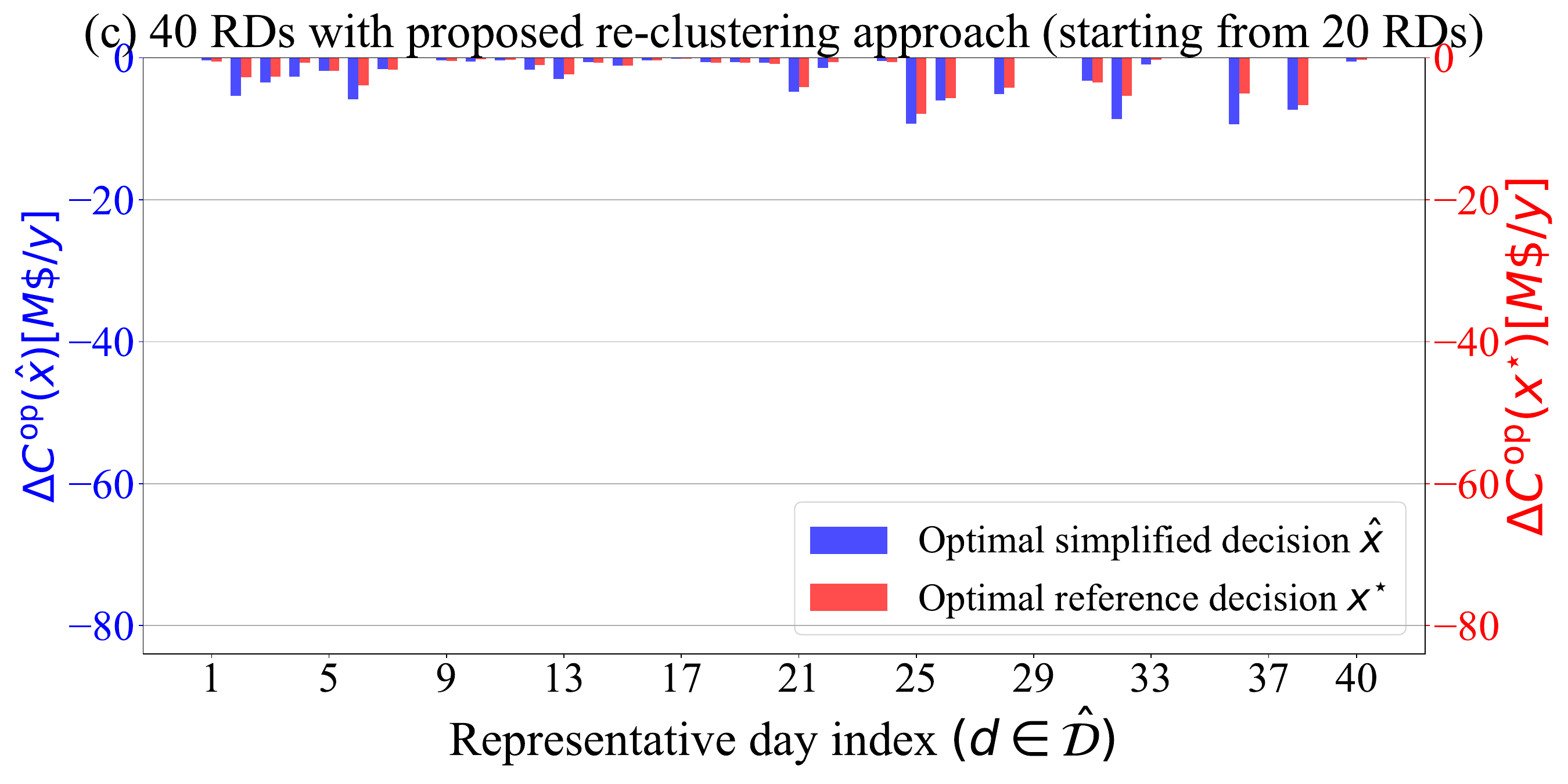}%
        \label{fig:OC_RD_combined_40_fd}}
    \caption{Distribution of operational estimation error across representative days, evaluated for the approximate solution $\hat{x}$ resulting from the use of RDs (blue) and the reference solution $x^\star$ (red).}
    \label{fig:OC_RD_combined}
\end{figure}
We next analyze how the operational estimation error is distributed over RDs. Fig.~\ref{fig:OC_RD_combined_20} and Fig.~\ref{fig:OC_RD_combined_40} show the operational estimation error distribution for 20 and 40 RDs. For each RD, the operational estimation error associated with its cluster of original days is
\begin{equation} \label{eq:RDopcosterror}
    \Delta C_k^{\mathrm{op}}\left(x\right) = 
    C^{\mathrm{op}}\left(x, \{(\hat{w}_k,\hat{T}_k)\}\right) - C^{\mathrm{op}}\left(x, \mathcal{D}_k\right).
\end{equation}
All RDs are mean-based and therefore underestimate the actual operational costs (negative errors), consistent with Proposition~\ref{proposition:Under-estimation}. The distribution is again extremely uneven such that a small number of RDs contributed to most of the total error, especially at small RD counts. Moreover, large difference across $\hat{x}$ and $x^\star$ can be observed also on those problematic RDs, driving both the value of the decision error and the tightness of the bound in Proposition~\ref{proposition:general_bound_decision_error}. Simply increasing the number of RDs does not fully resolve these outliers, demonstrating that clustering based on statistical similarity does not efficiently reduce the operational estimation error.
\section{Feedback Enhancement of Time series Aggregation}
\subsection{Proposed Approach}
The analysis in Section~\ref{sec:Analysis of Operational Estimation Error} demonstrated that the operational estimation error is highly unevenly distributed among RDs, with a small subset contributing to the majority of the total error. By running the full-space operational cost model, these RDs with high operational estimation errors and their linked original days can be located. To tighten the bound using Theorem~\ref{theorem:Bounding_OptimalityGap}, an iterative re-clustering TSA enhancement process is proposed to further cluster the identified poorly performing RDs, while keeping the other well-performing RDs. 

The proposed method, shown in Algorithm~\ref{alg:recluster}, first used standard clustering using a relatively small number of RDs, obtained via mean-based hierarchical clustering. The reduced-scale expansion planning model is then solved to determine the estimated investment decision $\hat{x}$. This decision is evaluated in the full-space operational cost model to compute the operational estimation error for each RD. By analyzing the distribution of these errors, the RDs with the largest weighted errors are identified as the most problematic. The original days linked to these poorly performing RDs are then isolated and subjected to an additional round of clustering, this time with a larger number of RDs to better capture their variability. The newly generated RDs replace the original problematic ones, forming an updated RD set. The refinement continues until a target number of RDs is reached.
\begin{algorithm}[t]
    \caption{Feedback re-clustering of worst-performing RDs}
    \label{alg:recluster}
    \begin{algorithmic}[1]
        \STATE Inputs: initial RD count $N_0$, number of feedback loops $N_{\mathrm{loop}}$, per-loop RD increase $N_{\mathrm{step}}$, number of worst RDs to refine $N_{\mathrm{bad}}$.
        \STATE Initialize $nrd \gets N_0$; build initial RD set $\hat{\mathcal{D}}(nrd)$ by standard clustering.
        \WHILE{$nrd < N_0 + N_{\mathrm{loop}} \cdot N_{\mathrm{step}}$}
            \STATE Solve the reduced-scale planning on $\hat{\mathcal{D}}(nrd)$; obtain $\hat{x}$ and $C^{\mathrm{tot}}(\hat{x},\hat{\mathcal{D}}(nrd))$.
            \STATE Evaluate $\hat{x}$ on $\mathcal{D}^{\mathrm{full}}$ to get $C^{\mathrm{tot}}(\hat{x},\mathcal{D}^{\mathrm{full}})$ using the full-scale operational cost model; compute the operational estimation error for each RD using \eqref{eq:RDopcosterror}: $\{\Delta C^{\mathrm{op}}_k\}_{k=1}^{nrd}$.
            \STATE Identify $N_{\mathrm{bad}}$ RDs with the largest $|\Delta C^{\mathrm{op}}_k|$.
            \STATE Collect original days associated with the poor-performing RDs and re-cluster them into $N_{\mathrm{bad}} + N_{\mathrm{step}}$ clusters.
            \STATE Replace the located poor-performing RDs in $\hat{\mathcal{D}}(nrd)$ with the new RDs to obtain $\hat{\mathcal{D}}(nrd + N_{\mathrm{step}})$.
            \STATE $nrd \gets nrd + N_{\mathrm{step}}$.
    \ENDWHILE
    \STATE Output: $\hat{\mathcal{D}}(nrd)$.
    \end{algorithmic}
\end{algorithm}

\subsection{Performance Analysis}
To evaluate the performance of the proposed re-clustering approach, two feedback variants with mean-based hierarchical clustering were tested: starting from $N_0{=}20$ and from $N_0{=}30$ RDs. In each iteration we refined the worst-performing cluster incrementally ($N_{\mathrm{bad}}=1; N_{\mathrm{step}}=1$). Results were benchmarked against direct mean-based hierarchical clustering for the same final RD count. The full-scale problem was also solved to optimality to determine the decision error.

Fig.~\ref{fig:deltaTC_a101_1} shows (i) the normalized decision error and (ii) the normalized operational estimation error (objective bounding) as functions of the RD count. Both metrics are normalized by the full-resolution optimal total cost $C^{\mathrm{tot}}(x^\star,\mathcal{D}^{\mathrm{full}})$ for better interpretability. Across all settings, increasing the number of RDs improves performance but with diminishing effects. Relative to the baseline, the feedback method shows significant gains, especially at small RD counts where the imbalanced distribution in operational estimation errors is severe. Moreover, we point out that the upper bound is especially reduced, which is beneficial for bounding the impact of the TSA procedure.

The proposed feedback mechanism directly targets at the operational estimation error established in Section~\ref{sec:Error Bounds of Reduced Problems}. Fig.~\ref{fig:OC_RD_combined_40_fd} demonstrates this, showing that by focusing additional representational capacity on the most problematic clusters, the method achieves more balanced error distributions, improving both operational accuracy and the quality of investment decisions. Notably, when baseline improvements begin to drop, inserting a feedback step provides a clear additional boost by mitigating the imbalance in the error distribution. Between the two variants, starting feedback earlier ($N_0{=}20$) achieves a larger improvement for a given number of RDs, as it inserts more steps that prioritize representational capacity for poorly performing clusters. 

In practical use, the benefit of feedback enhancement should be balanced with the additional cost of evaluating the full-scale operational model. In the common case where the investment model has many integer decision variables, computation time increases dramatically with the number of investment options. The 24-bus reference model with four transmission lines and four wind farm candidates required 18,694\,s; adding one more line and two wind farms increased it to 32,196\,s. In contrast, solving the corresponding linear operational model with the full scenario set took 384\,s and 364\,s, respectively. The reduced-scale planning model required 37\,s for 20 RDs, 156\,s for 40, and 360\,s for 60.  Therefore, directing solving the full or even moderately reduced planning problems can become computationally difficult for large-scale systems, whereas, the linear operational models remain feasible, which can be further accelerated with parallel computing. Consequently, performing multiple feedback enhancement steps is worthwhile. This paper focused on the potential improvement of outcomes, which can serve to develop practical iterative procedures. 

\begin{figure}[tbp]
    \centering
    \includegraphics[width=.88\columnwidth]{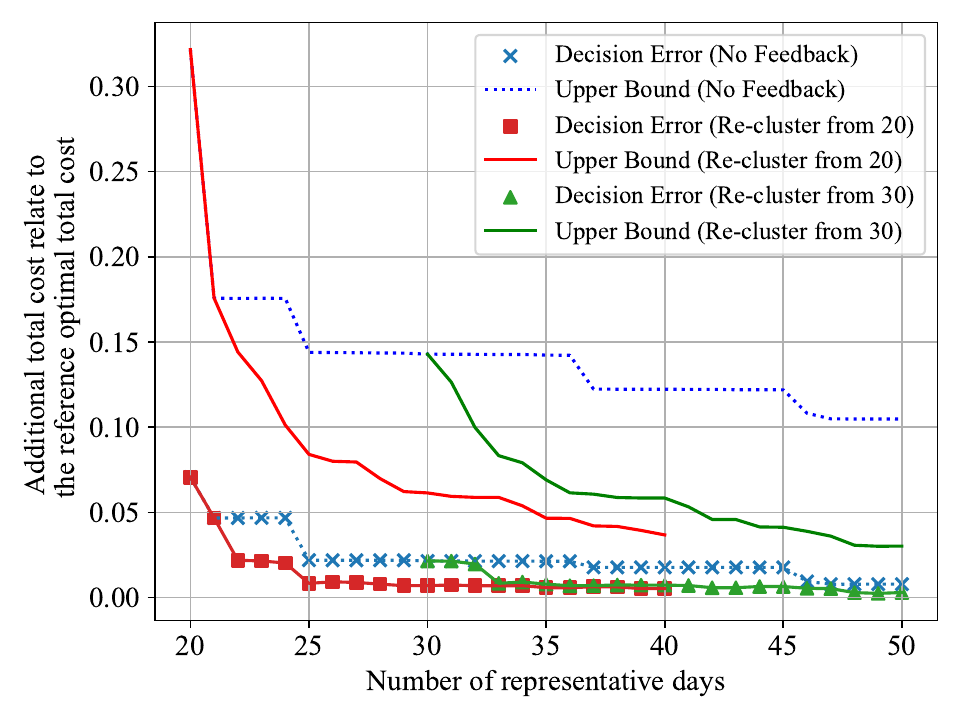}
    \caption{Performance of reduced-scale planning model with different number of RDs using iterative re-clustering (starting from 20 and 30) and benchmarking with the direct mean-based hierarchical clustering method.}
    \label{fig:deltaTC_a101_1}
\end{figure}

\section{Conclusions and Future Work}
This paper investigates the impact of TSA on the expansion planning decisions with RDs, directly targeting the objective function, the investment and operational cost. We showed that, for all TSA approaches, the error in the objective function compared to the reference decision (decision error) is bounded by an expression using only the approximation error of the operational cost. Furthermore, if TSA methods use mean-based clustering and are applied to relatively simple operational models, the operational estimation error for the estimated decision $\hat{x}$ is a practical upper bound for the total decision error. It was observed that a small subset of representative (and original) days contributes disproportionally to the total operational estimation error. A feedback re-clustering mechanism was proposed to specifically increase the representational capacity for the days associated with these RDs. In a case study, this significantly reduced the decision error and tightened the bound relative to standard clustering approaches. 

Future work will further optimize the computational efficiency of the method by identifying re-clustering strategies that minimize the number of sequential optimizations. The framework will also be extended to settings with long-term storage, considering inter-period chronology by using linked RDs (e.g., \cite{kotzur_time_2018}), analyzing the impact of TSA with linked RDs in power system planning.

\renewcommand*{\bibfont}{\footnotesize}
\printbibliography

\end{document}